\numberwithin{equation}{section}
\theoremstyle{plain}
\newtheorem{theorem}{Theorem}[section]
\newtheorem{proposition}[theorem]{Proposition}
\newtheorem{corollary}[theorem]{Corollary}
\theoremstyle{definition}
\newtheorem{definition}[theorem]{Definition}
\newtheorem{example}[theorem]{Example}
\definecolor{newblue}{rgb}{0.27, 0.32, 0.86}
\definecolor{newred}{rgb}{0.86, 0.32, 0.27}
\def\utr{\, \underline{\triangleright}\, }
\def\otr{\, \overline{\triangleright}\, }
\title[Biquandle invariant of immersed surface-links, Yoshikawa moves, ribbon 2-knots]{On biquandle-based invariant of immersed surface-links, Yoshikawa oriented fifth move, and ribbon 2-knots}
\author{Micha{\l} Jab{\l}onowski}
\address{Institute of Mathematics, Faculty of Mathematics, Physics and Informatics,\newline University of Gda\'nsk, 80-308 Gda\'nsk, Poland}
\keywords{biquandle invariant, immersed surface-links, Yoshikawa moves, ribbon 2-knots}
\subjclass[2020]{57K45 (primary), secondary: 57Q35, 57R42, 57K12}
\email{michal.jablonowski@gmail.com}
\date{\today}
\begin{document}

\maketitle

\begin{abstract}
We resolve an open problem by showing that the Yoshikawa's fifth oriented move in his list cannot be reproduced by any finite sequence of the other nine moves and planar isotopies. Our proof introduces a link-type semi-invariant that remains unchanged under all moves except the fifth, highlighting its necessity in generating the full move set.
Second, we extend the algebraic toolkit for immersed surface-links. After revisiting the banded-unlink description of immersed surfaces and the twelve local moves that relate their diagrams, we develop a biquandle-based coloring theory. By assigning elements of a biquandle to diagram arcs according to local rules, we obtain a counting invariant of immersed surfaces up to isotopy.
Third, we show that there are infinitely many pairs of ribbon $2$-knots with isomorphic groups but different knot quandles.
\end{abstract}

\renewcommand*{\arraystretch}{1.4}

\section{Introduction}\label{sec1}

The paper settles an open question about Yoshikawa's moves, the ten local moves that generate all diagrammatic isotopies of oriented surface-links. Previous work had proved the independence of other moves, not the oriented fifth move. We construct a semi-invariant derived from admissible marked-graph diagrams whose value is unchanged by every move in the generating set of other moves. Exhibiting two diagrams of the same surface-link that yield different values of the semi-invariant.
\par
Second, we enrich the algebraic framework for immersed (i.e., possibly self-intersecting) surface-links. We revisit the singular marked-graph (or "banded unlink") presentation, extending the catalog of oriented planar moves so that any two diagrams of equivalent immersed surfaces differ by the twelve types of moves. We equip an immersed surface-link with a biquandle structure. Each semi-arc of a singular marked diagram is colored by elements of a biquandle according to local rules that respect crossings, markers and singular points. Because the biquandle axioms correspond to the $\Gamma$-moves, the number of colorings is an invariant of the underlying surface type. The authors compute this new biquandle coloring invariant for an illustrative example, obtaining admissible colorings with a three-element biquandle.
\par
Third, we prove that there are infinitely many pairs of inequivalent ribbon $2$-knots that share the same fundamental group of the exterior but differ in the fundamental quandle.
\par 
This paper is organized as follows. In Section \ref{sec2}, we will prove an open problem of the independence of Yoshikawa oriented fifth move $\Gamma_5$ from the other generating set of oriented moves $\Gamma_1, \ldots, \Gamma_8, \Gamma_4', \Gamma_6'$ connecting diagrams of isotopic oriented surface-links. In Section \ref{sec3}, we review the singular marked graph diagram method of presenting immersed surface-links and present the oriented planar moves between these diagrams. In Section \ref{sec4}, we present the biquandle structure of an immersed surface-link with a method to obtain a biquandle coloring from a singular marked graph diagram and present a new invariant for oriented immersed surface-links. We show an example of the calculation. In Section \ref{sec5}, we show that there are infinitely many pairs of ribbon $2$-knots with isomorphic groups but different knot quandles.

\section*{Acknowledgements}

This research was funded in whole or in part by NCN 2023/07/X/ST1/00157.

\section{Independence of the Yoshikawa oriented fifth move}\label{sec2}

An embedding (or its image when no confusion arises) of a closed (i.e., compact, without boundary) surface $F$ into the Euclidean $\mathbb{R}^4$ (or into the $\mathbb{S}^4=\mathbb{R}^4\cup\{\infty\}$) is called a \emph{surface-link} (or \emph{surface-knot} if it is connected).
\par
Two surface-links are \emph{equivalent} if there exists an orientation-preserving homeomorphism of the four-space $\mathbb{R}^4$ to itself (or equivalently an auto-homeomorphism of the four-sphere $\mathbb{S}^4$), mapping one of those surfaces onto the other. See \cite{Kam17} for introductory material on surface-links.

A \emph{marked graph diagram} is a planar $4$-valent graph embedding, with the vertices decorated either by a classical crossing or marker decoration.

Any abstractly created marked graph diagram is an \emph{admissible diagram} if and only if both its resolutions are trivial classical link diagrams.

\par
It is known that the set of ten types of moves $\mathcal{G}=\{\Gamma_1, \ldots, \Gamma_8, \Gamma_4', \Gamma_6'\}$ (presented in Fig.\;\ref{michal_oriA} and \;\ref{michal_oriB0}), called oriented Yoshikawa moves (\cite{Yos94}), is a generating set of moves that relates two marked graph diagrams (modulo a planar isotopy) presenting equivalent oriented surface-links. In \cite{JKL13}, \cite{JKL15} it is shown that any Yoshikawa move from the set $\mathcal{G}\backslash\{\Gamma_5, \Gamma_8\}$ is independent of the other nine types. It is an open problem (see \cite{JKL15}, \cite{Oht16}) whether the move $\Gamma_5$ is independent of the other Yoshikawa moves from the set $\mathcal{G}$.

\begin{figure}[ht]
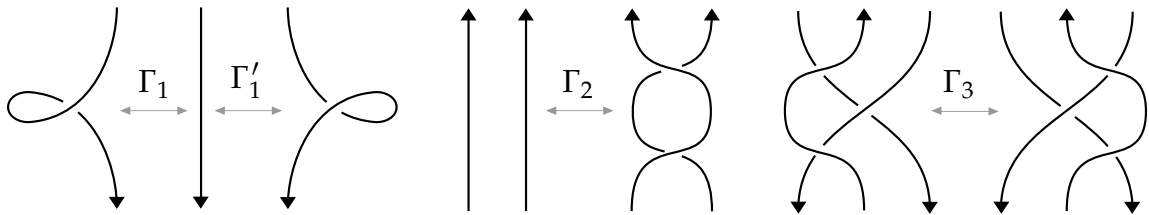

	\begin{center}
		\begin{lpic}[]{michal_oriA(15.5cm)}
			\lbl[b]{24,19;$\Gamma_1$}
			\lbl[b]{39,19;$\Gamma_1'$}
			\lbl[b]{93,19;$\Gamma_2$}
			\lbl[b]{155,19;$\Gamma_3$}
			
		\end{lpic}
		\caption{Moves, part I.\label{michal_oriA}}
	\end{center}
\end{figure}

\begin{figure}[ht]
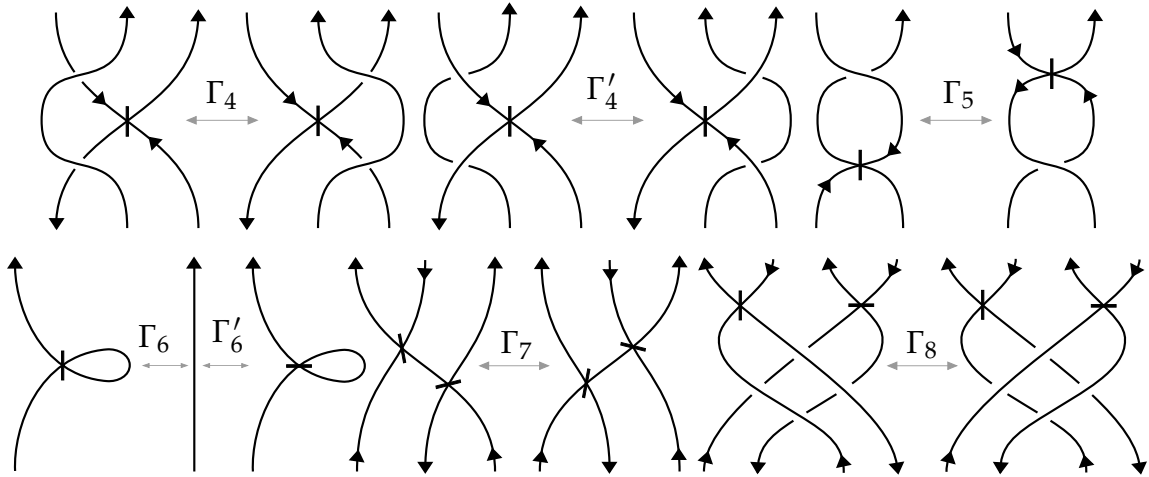

	\begin{center}
		\begin{lpic}[]{michal_oriB0(15.5cm)}
			\lbl[b]{35,60;$\Gamma_4$}
			\lbl[b]{97,60;$\Gamma_4'$}
			\lbl[b]{155,60;$\Gamma_5$}
			\lbl[b]{24,20;$\Gamma_6$}
			\lbl[b]{36,20;$\Gamma_6'$}
			\lbl[b]{83,19;$\Gamma_7$}
			\lbl[b]{149,19;$\Gamma_8$}
		\end{lpic}
		\caption{Moves, part II.\label{michal_oriB0}}
	\end{center}
\end{figure}

The independence of every Yoshikawa move will lead us to obtain a minimal generating set of moves on oriented graph diagrams. We will prove the following main theorem.

\begin{theorem}\label{thm1}
	
	The Yoshikawa move $\Gamma_5$ cannot be realized by a finite sequence of Yoshikawa moves of the other nine types from the set $\mathcal{G}$, and a planar isotopy.
	
\end{theorem}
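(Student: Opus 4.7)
The plan is to establish Theorem \ref{thm1} by constructing a semi-invariant $\mu$ defined on marked graph diagrams, invariant under planar isotopy and under every move in $\mathcal{G}\setminus\{\Gamma_5\}$, but which can change under a single application of $\Gamma_5$. Given such a $\mu$, it suffices to exhibit two admissible marked graph diagrams $D_0$ and $D_1$ presenting the same oriented surface-link and satisfying $\mu(D_0)\ne\mu(D_1)$: then no finite sequence of moves drawn only from $\mathcal{G}\setminus\{\Gamma_5\}$, combined with planar isotopies, can connect $D_0$ to $D_1$, which is exactly the content of the theorem.

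The natural source of data for $\mu$ is the pair of oriented smoothings $L_-(D)$ and $L_+(D)$ obtained by resolving the markers of $D$ in their two standard ways, together with the combinatorial interaction of nearby crossings and markers. Although on an admissible diagram both $L_\pm(D)$ are trivial classical links (so their isotopy classes alone cannot distinguish diagrams), one can still extract finer diagrammatic data — writhes or local signs at each marker, suitably weighted by the over/under crossings in a neighbourhood, or a $\mathbb{Z}/2$-valued contribution attached to each marker — and assemble these into a single quantity $\mu(D)$ valued in some abelian group. The design principle is that every move in $\mathcal{G}\setminus\{\Gamma_5\}$ acts locally on a small planar disk, so that its effect on $\mu$ is determined by a fixed local rearrangement whose contribution can be arranged to cancel, whereas $\Gamma_5$ performs a genuinely different marker rearrangement that no local cancellation rule can absorb.

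With $\mu$ in hand the argument splits into a move-by-move verification and an explicit example. For the Reidemeister-type moves $\Gamma_1, \Gamma_1', \Gamma_2, \Gamma_3$ invariance reduces to a standard classical crossing accounting. For the crossing/marker moves $\Gamma_4, \Gamma_4', \Gamma_6, \Gamma_6'$ one must combine marker and crossing contributions in the local window and check that they cancel; for the marker-marker move $\Gamma_7$ and the band/saddle move $\Gamma_8$ the check is carried out on markers alone, noting that $\Gamma_8$ changes the topological type of the represented surface but only via a controlled local insertion. The witnesses $D_0, D_1$ can then be produced in the most direct way: take any admissible diagram $D_0$ to which $\Gamma_5$ applies, set $D_1 := \Gamma_5(D_0)$, and arrange $\mu(D_0)\ne\mu(D_1)$ by construction. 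Since $\Gamma_5$ is a valid equivalence move, $D_0$ and $D_1$ automatically present the same oriented surface-link.

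The main obstacle is pinning down the precise definition of $\mu$: it must be sufficiently local to be manifestly invariant under all nine moves of $\mathcal{G}\setminus\{\Gamma_5\}$, yet structurally sensitive enough that $\Gamma_5$ — which sits diagrammatically very close to $\Gamma_6, \Gamma_6'$ and $\Gamma_7$ — is the unique generator that it detects. The cancellation check under $\Gamma_4, \Gamma_4', \Gamma_6, \Gamma_6'$, where crossings slide across markers and mix the classical and marker contributions, is the most delicate step and will dictate the correct form of the definition. Once those cancellations are arranged, the remaining verifications and the construction of the discriminating pair $(D_0, D_1)$ are a direct finite check.
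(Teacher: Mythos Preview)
Your proposal correctly identifies the overall architecture --- build a semi-invariant stable under $\mathcal{G}\setminus\{\Gamma_5\}$ and exhibit a discriminating pair --- but it does not actually carry out the construction, and you explicitly acknowledge this: ``The main obstacle is pinning down the precise definition of $\mu$.'' That obstacle is the entire content of the proof. What you have written is a description of the \emph{shape} a proof should take, not a proof.

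Moreover, the concrete directions you suggest for $\mu$ (writhes, local signs at markers, $\mathbb{Z}/2$-valued marker contributions) are unlikely to succeed. Writhe-type quantities are already disturbed by $\Gamma_1$; local crossing/marker sign counts are exactly the sort of data that $\Gamma_4,\Gamma_4',\Gamma_6,\Gamma_6'$ scramble, and you yourself flag those cancellations as ``the most delicate step'' without indicating why they should go through. There is also a circularity in your example step: you write ``set $D_1:=\Gamma_5(D_0)$, and arrange $\mu(D_0)\ne\mu(D_1)$ by construction,'' but nothing in the preceding discussion \emph{arranges} this --- it is precisely what has to be shown once $\mu$ is defined.

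The paper's actual invariant is of a different nature from anything you propose. It is not a numerical or sign-count quantity at all: one replaces each oriented marker by a fixed classical crossing (the choice depending on the local orientation), obtaining from the marked graph diagram an honest classical link diagram, and takes its ambient-isotopy type in $\mathbb{R}^3$ as the value $L_*(D)$. Invariance under $\Gamma_1,\ldots,\Gamma_4,\Gamma_4',\Gamma_6,\Gamma_6',\Gamma_7,\Gamma_8$ then reduces to classical Reidemeister moves on the resulting link diagram, while $\Gamma_5$ genuinely changes the crossing pattern. The discriminating pair is a specific diagram whose $L_*$ is the knot $6_1$ versus a $\Gamma_5$-related diagram whose $L_*$ is an unlink. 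To complete your argument you would need to supply a definition of comparable strength and then perform the nine invariance checks; as it stands, the key idea is missing.
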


\begin{proof}[Proof of theorem \ref{thm1}]
	We define a semi-invariant $L_*$ such that it preserves its values after performing each move from the set $\mathcal{G}\setminus \{\Gamma_5\}$, and we construct two pairs of admissible diagrams $D_1, D_2$ of equivalent surface-links such that $L_*(D_1)\not=L_*(D_2)$.
	\par 
	Define $L_*$ as an ambient isotopy class of classical links obtained from admissible diagrams by changing all oriented marker decorations as shown in Figure \ref{michal_080} respectively. It is straightforward to see that $L_*$ is unchanged by any move in $G\setminus \{\Gamma_5\}$.
	
	\begin{figure}[h]
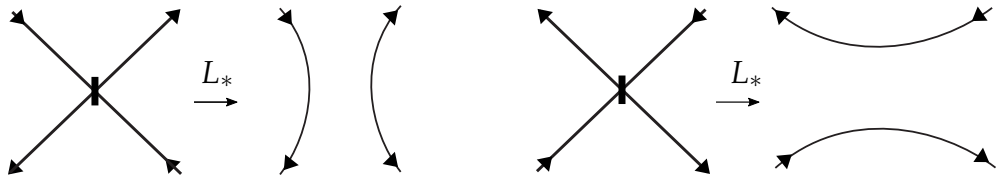

		\begin{center}
			\begin{lpic}[]{michal_080(13cm)}
				\lbl[b]{19,8;$L_*$}
				\lbl[b]{67,8;$L_*$}
			\end{lpic}
			\caption{Defining a semi-invariant $L_*$.\label{michal_080}}
		\end{center}
	\end{figure}
	
	The diagrams $D_1$ and $D_2$ represent equivalent surface-links, with the desired property explained above, are illustrated in Figure \ref{michal_030}, where we have that $L_*(D_1)$ is of the ambient isotopy type of the $6_1$ knot (i.e. a nontrivial classical knot with the Alexander polynomial $2-5t+2t^2$), on the other hand, $L_*(D_2)$ is ambient isotopy type of the trivial classical link.
	
	\begin{figure}[h]
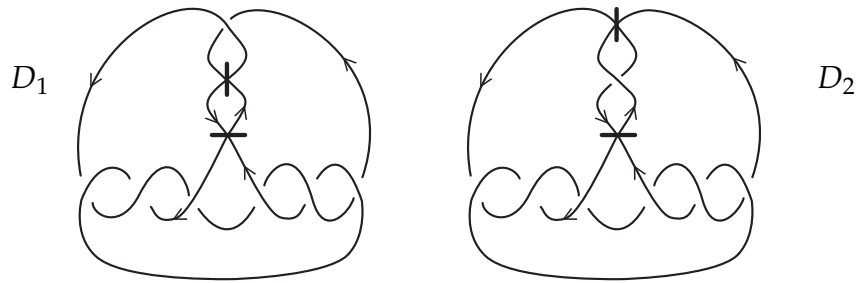

		\begin{center}
			\begin{lpic}[l(1.1cm),r(1.1cm)]{michal_030(11cm)}
				\lbl[r]{-2,15;$D_1$}
				\lbl[l]{55,15;$D_2$}
			\end{lpic}
			\caption{The diagrams $D_1$ and $D_2$.\label{michal_030}}
		\end{center}
	\end{figure}

\end{proof}

\section{Singular banded unlinks}\label{sec3}

	Let $X,Y$ be smooth ($C^{\infty}$) manifolds. Let $f:X^n\to Y^{m}$ be a smooth map. It is called an \emph{immersion} if at each point $x\in X$ the induced differential is a monomorphism. By the Whitney immersion theorem, any smooth map $f:X^n\to Y^{m}$ can be approximated homotopically with arbitrary accuracy by an immersion when $m\geq 2n$. 
	
	We consider smooth immersions $f:X^n\to Y^{2n}$ such that the following three conditions are satisfied: 
	
	(i) $\#|f^{-1}(f(x))|\leq 2$,
	(ii) there are only a finite number of points with $\#|f^{-1}(f(x))|= 2$,
	(iii) at each singularity $p=f(x)=f(y)$, there is a coordinate chart around $p$ where the two coordinate subspaces $\mathbb{R}^n\times 0$ and $0 \times\mathbb{R}^n$ are exactly the immersed images of $f$ near $x$ and $y$ respectively. That is the map is "self-transverse". \\
	
	By general position theorems for maps, any smooth map $f:X^n\to Y^{2n}$ can be approximated homotopically with arbitrary accuracy by an immersion described above.
	
	When we consider the case of immersions $\mathbb{S}^1\to\mathbb{R}^2$ we have the "classical" case, which includes, for example, planar projections of knots and links embedded in $\mathbb{R}^3$, losing the (codimension two) knotting phenomena. The next-dimension case is $f:X^2\to \mathbb{R}^{4}$ when we have both the finite point generic intersections and nontrivial generic embeddings (a known generalization of the classical knot theory).
	
	An immersion in this dimensions (or its image when no confusion arises) of a closed (i.e. compact, without boundary) surface $F$ into the Euclidean $\mathbb{R}^4$ (or into the $\mathbb{S}^4=\mathbb{R}^4\cup\{\infty\}$) is called an \emph{immersed surface-link} (or \emph{immersed surface-knot} if it is connected).
	\par
	Two immersed surface-links are \emph{equivalent} if there exists an orientation-preserving homeomorphism of the four-space $\mathbb{R}^4$ to itself (or equivalently an auto-homeomorphism of the four-sphere $\mathbb{S}^4$), mapping one of those surfaces onto the other. Fix an immersed surface-link $F$ embedded in a manifold $\mathbb{S}^4$. For an open neighborhood, denoted $N(F)$, the \emph{exterior} of $F$ is $E(F):=\mathbb{S}^4\backslash N(F)$. 
	\par 
	If two singular surface-links are equivalent, then their exteriors are diffeomorphic.

		A \emph{singular link} $L$ in $\mathbb{R}^3$ is the image of an immersion in the classical case $\iota: S^1 \sqcup \cdots \sqcup S^1 \rightarrow \mathbb{R}^3$ which is injective except at isolated double points that are not tangencies. At every double point $p$ we include a small disk $v\cong D^2$ embedded in $\mathbb{R}^3$.  We refer to these disks as the {\emph{vertices}} of $L$. The double points of a singular link $L$ correspond to the isolated double points of an immersed surface in $\mathbb{R}^4$.
		
		For each vertex $v$ of $L$, these two opposite push-offs form a bigon in a neighborhood of $v$, which bounds an embedded disk $D_v$.  This disk $D_v$ can be chosen so that its interior intersects $L$ transversely in a single point near $v$.  For each vertex $v$ select such a disk $D_v$ (ensuring that all of these disks are pairwise disjoint).
		
		\begin{figure}
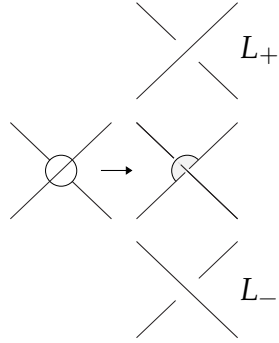

			\begin{center}
				\begin{lpic}[r(3cm)]{michal_11(5cm)}
					\lbl[l]{75,95;$L_+$}
					\lbl[l]{75,15;$L_-$}
				\end{lpic}
			\end{center}
			\caption{A vertex of a marked singular link and the corresponding surface cuts.}\label{michal_11}
			
		\end{figure}

		Let $D_L$ denote the union of all of these embedded disks. Let $(L, \sigma, B)$ be an singular link with bands $B=\{b_1, \dots, b_n\}$ and $\Delta_1,\ldots,\Delta_a \subset \mathbb{R}^3$ be mutually disjoint $2$-disks with $\partial(\cup_{j=1}^a\Delta_j)= L_{B+}$, and let $\Delta_1',\ldots,\Delta_b' \subset \mathbb{R}^3$ be mutually disjoint $2$-disks with $\partial(\cup_{k=1}^b\Delta_k')= L_-$.
		
		We define $\Sigma \subset \mathbb{R}^3 \times \mathbb{R} = \mathbb{R}^4 $ an \emph{immersed surface-link corresponding to} $(L, \sigma, B)$ by the following cross-sections.
		
		$$
		(\mathbb{R}^3_t, \Sigma \cap \mathbb{R}^3_t)=\left\{%
		\begin{array}{ll}
			(\mathbb R^3, \emptyset) & \hbox{for $t > 1$,}\\
			(\mathbb R^3, L_{B+} \cup (\cup_{j=1}^a\Delta_j)) & \hbox{for $t = 1$,} \\
			(\mathbb R^3, L_{B+}) & \hbox{for $0 < t < 1$,} \\
			(\mathbb R^3, L_+\cup (\cup_{i=1}^n b_i)) & \hbox{for $t = 0$,} \\
			(\mathbb R^3, L_+) & \hbox{for $-1/2 < t < 0$,} \\
			(\mathbb R^3, L_- \cup D_L) & \hbox{for $t = -1/2$,} \\
			(\mathbb R^3, L_-) & \hbox{for $-1 < t < -1/2$,} \\
			(\mathbb R^3, L_- \cup (\cup_{k=1}^b\Delta_k')) & \hbox{for $t = -1$,} \\
			(\mathbb R^3, \emptyset) & \hbox{for $ t < -1$.} \\
		\end{array}
		\right.
		$$
		
		By an ambient isotopy of $\mathbb R^3$, we shorten the bands of a singular link with bands $LB$ so that each band is contained in a small $2$-disk. Replacing the neighborhood of each band with the neighborhood of a $4$-valent marked vertex as in Fig.\;\ref{MJ_100}, we obtain a singular marked graph.
		
		\begin{figure}[ht]
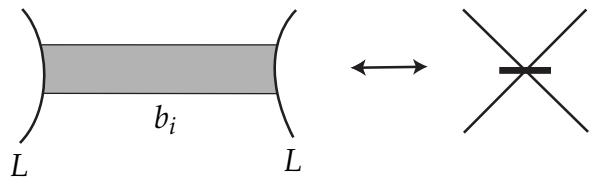

			\begin{center}
				\begin{lpic}[b(0.5cm)]{MJ_100(7.5cm)}
					\lbl[t]{30,8;$b_i$}
					\lbl[t]{0,-2;$L$}
					\lbl[t]{56,-1;$L$}
				\end{lpic}
				\caption{A band corresponding to a marked vertex.\label{MJ_100}}
			\end{center}
		\end{figure}

		A \emph{singular marked graph diagram} is a planar $4$-valent graph embedding, with the vertices decorated either by a classical crossing, marker or a singular decoration.
		
		\begin{figure}
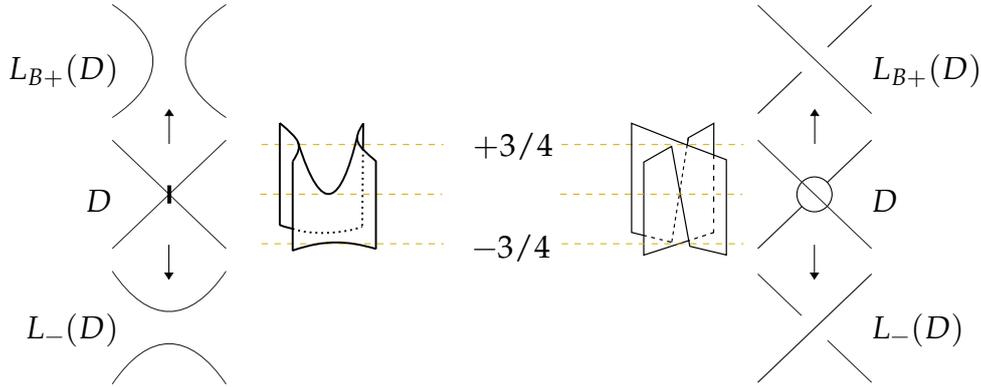

			\begin{center}
				\begin{lpic}[]{michal_06(10cm)}
					\lbl[l]{105,40;$-3/4$}
					\lbl[l]{105,70;$+3/4$}
					\lbl[r]{2,16;$L_-(D)$}
					\lbl[r]{0,54;$D$}
					\lbl[r]{2,92;$L_{B+}(D)$}
					\lbl[l]{221,16;$L_-(D)$}
					\lbl[l]{221,54;$D$}
					\lbl[l]{221,92;$L_{B+}(D)$}
				\end{lpic}
				\caption{A neighborhood in the four-space of a marker and a singular point.\label{michal_06}}
			\end{center}
		\end{figure}

		\begin{figure}
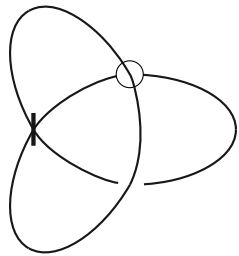

			
			\begin{center}
				\begin{lpic}[]{michal_14(3cm)}
					
				\end{lpic}
				\caption{Example of an admissible singular marked graph diagram.\label{michal_14}}
			\end{center}
			
		\end{figure}
		
		Any abstractly created singular marked graph diagram is an \emph{admissible diagram} if and only if both its resolutions $L_-$ and $L_{B+}$ are trivial classical link diagrams.

\begin{theorem}[\cite{HKM25, Jab23}]
	Two oriented immersed surface-links $\mathcal{L}_1$ and $\mathcal{L}_2$  are equivalent if and only if their singular marked diagrams are related by a finite sequence of moves $\Gamma_1, \ldots, \Gamma_{12}d$ shown in Figures \ref{michal_oriA}--\ref{michal_oriB0},  \ref{michal_oriC}--\ref{michal_oriE}.
\end{theorem}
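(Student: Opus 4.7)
The plan is to prove both directions of the equivalence separately. The ``if'' direction reduces to a local verification for each of the twelve move types, while the ``only if'' direction requires a general position argument on one-parameter families of immersions in $\mathbb{R}^4$, so I would split the argument along these lines.

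For the ``if'' direction, the ten classical Yoshikawa moves $\Gamma_1,\ldots,\Gamma_8,\Gamma_4',\Gamma_6'$ are already known to preserve the equivalence class of the embedded surface-link they present. It remains to verify each new move involving a singular vertex (the moves appearing in Figures \ref{michal_oriC}--\ref{michal_oriE}) by plugging both sides into the explicit cross-sectional formula from Section \ref{sec3}. In each case one works inside a small $4$-ball neighborhood of the modified region and exhibits an ambient isotopy of $(B^4,\Sigma\cap B^4)$ realizing the change; the six levels $t=\pm 1,\pm 1/2, 0$ in the cross-section recipe control how bands, double-point disks $D_v$, and capping disks are affected, and the verification is a direct but finite check.

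For the ``only if'' direction, given equivalent immersed surface-links $\mathcal{L}_1,\mathcal{L}_2$ realized by singular marked diagrams $D_1,D_2$, I would first produce a smooth ambient isotopy $h_s$ of $\mathbb{S}^4$ carrying $\mathcal{L}_1$ to $\mathcal{L}_2$. Interpolating between the banded-unlink presentations of $\mathcal{L}_1$ and $\mathcal{L}_2$ through this isotopy gives a one-parameter family of self-transverse immersed surfaces $\Sigma_s\subset\mathbb{R}^4$. Choosing a generic height function and a generic projection to $\mathbb{R}^2$, the resulting family of singular marked graph diagrams changes only at finitely many parameter values, each corresponding to one codimension-one event. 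By the local classification of such events, the possibilities are exactly: Reidemeister-type changes among classical crossings; band isotopies, slides, and births/deaths (the $\Gamma_i,\Gamma_i'$ of the embedded case); and the new events in which a singular point passes through a crossing, past a marker, past another singular point, or is created/cancelled in a self-tangency. Matching each event to the corresponding $\Gamma_i$ and concatenating produces the required finite move sequence from $D_1$ to $D_2$.

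The main obstacle is establishing the completeness of the list: showing that every generic codimension-one singularity that can appear along a path of self-transverse immersions together with their projections is realized, up to planar isotopy, by one of the twelve moves $\Gamma_1,\ldots,\Gamma_{12d}$. This is where care must be taken to enumerate all possible local configurations of a singular vertex with nearby markers, crossings, and other singular vertices, and to rule out further independent move types. This case analysis extends the Kawauchi--Shibuya--Suzuki style arguments from the embedded case to the immersed setting and is carried out in detail in the cited works \cite{HKM25, Jab23}, on which I would base the final classification.
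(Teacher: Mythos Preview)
The paper does not give its own proof of this theorem: it is stated with citation to \cite{HKM25, Jab23} and used as a black box, so there is no argument in the paper to compare against. Your proposal is a reasonable high-level outline of the strategy those references employ (local verification for the ``if'' direction, a generic one-parameter family and codimension-one singularity classification for the ``only if'' direction), and you correctly identify the hard part---completeness of the move list---and defer it to the cited works. In that sense your proposal is not so much an independent proof as a roadmap of what the cited proofs do, which is appropriate here since the present paper treats the result as imported.
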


\begin{figure}[ht]
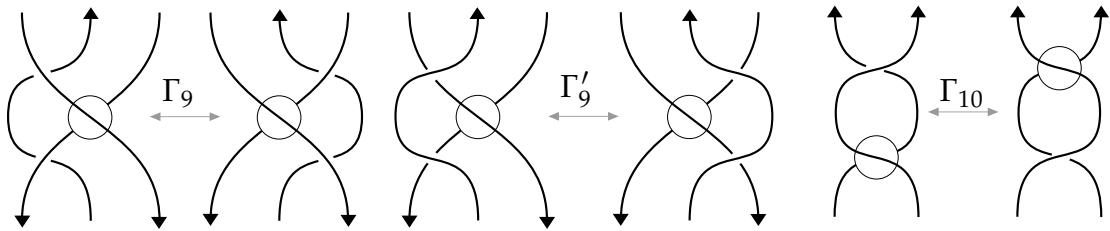

	\begin{center}
		\begin{lpic}[]{michal_oriC(14.5cm)}
			\lbl[b]{28,20;$\Gamma_9$}
			\lbl[b]{93,20;$\Gamma_9'$}
			\lbl[b]{156,20;$\Gamma_{10}$}
		\end{lpic}
		\caption{Moves, part III.\label{michal_oriC}}
	\end{center}
\end{figure}

\begin{figure}[ht]
	\begin{center}
		\begin{lpic}[]{michal_oriD(11.5cm)}
			\lbl[b]{35,62;$\Gamma_{11}a$}
			\lbl[b]{112,62;$\Gamma_{11}b$}
			\lbl[b]{35,20;$\Gamma_{11}c$}
			\lbl[b]{112,20;$\Gamma_{11}d$}
		\end{lpic}
		\caption{Moves, part IV.\label{michal_oriD}}
	\end{center}
\end{figure}

\begin{figure}[ht]
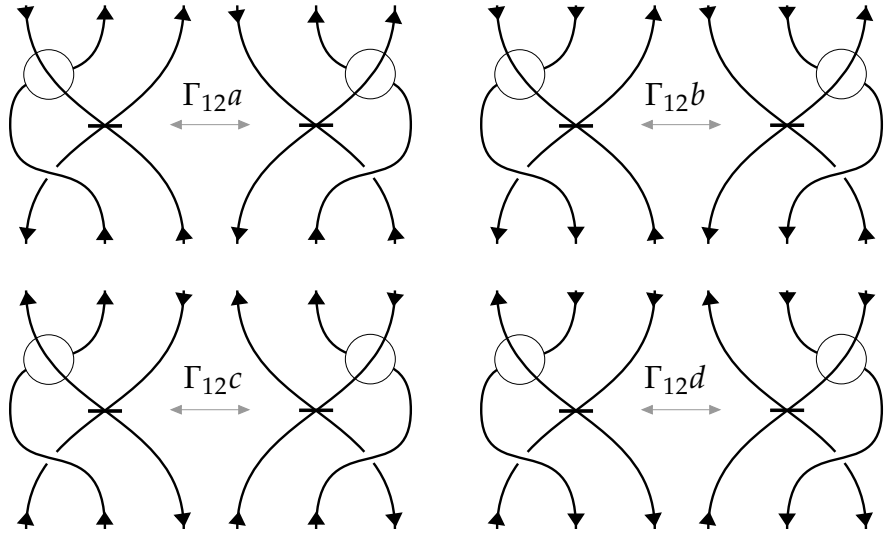

	\begin{center}
		\begin{lpic}[]{michal_oriE(11.5cm)}
			\lbl[b]{30,62;$\Gamma_{12}a$}
			\lbl[b]{98,62;$\Gamma_{12}b$}
			\lbl[b]{30,20;$\Gamma_{12}c$}
			\lbl[b]{98,20;$\Gamma_{12}d$}
		\end{lpic}
		\caption{Moves, part V.\label{michal_oriE}}
	\end{center}
\end{figure}

\section{Biquandle structure}\label{sec4}

A generalization of quandles (called biquandles) was introduced in \cite{KR03}. A \emph{biquandle} is an algebraic structure with two binary operations satisfying certain conditions which can be presented by semi-arcs of links (or semi-sheets of surface-links) as its generators modulo oriented Reidemeister moves (or Roseman moves). In \cite{CES04}, J. S. Carter, M. Elhamdadi, and M. Saito introduced and used cocycles to define invariants via colorings of link diagrams by biquandles and a state-sum formulation.
\par 
In \cite{KKKL18} S. Kamada, A. Kawauchi, J. Kim and S. Y. Lee discussed the (co)homology theory of biquandles and developed the biquandle cocycle invariants for oriented surface-links by using broken surface diagrams generalizing quandle cocycle invariants. Then showed how to compute the biquandle cocycle invariants from marked graph diagrams.

\begin{definition}
	Let $X$ be a set. A \textit{biquandle structure} on $X$ is a
	pair of maps $\utr,\otr:X\times X\to X$ satisfying:
	\begin{itemize}
		\item[(i)] for all $x\in X$, $x\utr x=x\otr x$,
		\item[(ii)] the maps $\alpha_y,\beta_y:X\to X$ for all $y\in X$ 
		and $S:X\times X\to X\times X$ defined by
		\[\alpha_y(x)=x\otr y,\quad \beta_y(x)=x\utr y\quad \mathrm{and}\quad
		S(x,y)=(y\otr x, x\utr y)\]
		are invertible, and
		\item[(iii)] for all $x,y,z\in X$ we have the \textit{exchange laws}:

		\begin{enumerate}
			\item 			$(x\utr y)\utr (z\utr y) =  (x\utr z)\utr(y\otr z),$
			\item 			$(x\otr y)\utr (z\otr y) =  (x\utr z)\otr(y\utr z),$
			\item 			$(x\otr y)\otr (z\otr y) =  (x\otr z)\otr(y\utr z).$
		\end{enumerate}

	\end{itemize}
	Axiom (ii) is equivalent to the \emph{adjacent labels rule},
	which says that in the ordered quadruple $(x,y,x\utr y,y\otr x)$, any two 
	neighboring entries (including $(y\otr x, x)$ determine the other two.
	A \emph{biquandle} is a set $X$ with a choice of biquandle structure.
\end{definition}

\begin{example}
	A $\mathbb{Z}[t^{\pm 1},s^{\pm 1}]$-module has a biquandle structure known as
	an \emph{Alexander biquandle} defined by
	\[x\utr y = tx+(s-t)y \quad\mathrm{and}\quad  x\otr y = sx.\]
	In particular, a choice of units $t,s\in\mathbb{Z}_n$ defines an
	Alexander biquandle structure on $\mathbb{Z}_n$.
\end{example}

\begin{figure}[ht]
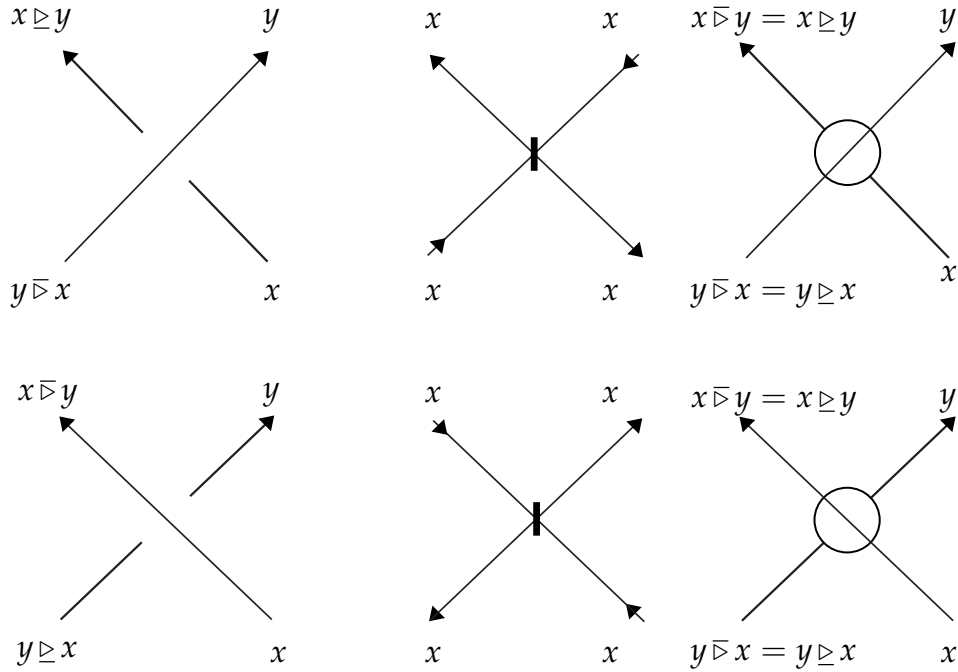

	\begin{center}
		\begin{lpic}[l(0.8cm),b(1.0cm),r(0.4cm),t(0.5cm)]{michal_biq_2(11.8cm)}
			\lbl[r]{2,95;$x\utr y$}
			\lbl[l]{32,95;$y$}
			\lbl[r]{60,95;$x$}
			\lbl[l]{85,95;$x$}
			\lbl[r]{60,36;$x$}
			\lbl[l]{85,36;$x$}
			\lbl[r]{2,52;$y\otr x$}
			\lbl[l]{32,52;$x$}
			\lbl[r]{60,52;$x$}
			\lbl[l]{85,52;$x$}
			\lbl[r]{60,-5;$x$}
			\lbl[l]{85,-5;$x$}
			\lbl[r]{3,36;$x \otr y$}
			\lbl[l]{32,36;$y$}
			\lbl[r]{3,-5;$y \utr x$}
			\lbl[l]{33,-5;$x$}
			
			\lbl[r]{125,95;$x\otr y = x\utr y$}
			\lbl[r]{125,52;$y\otr x = y\utr x$}
			\lbl[l]{138,55;$x$}
			\lbl[l]{138,95;$y$}

			\lbl[r]{125,35;$x\otr y = x\utr y$}
			\lbl[r]{125,-5;$y\otr x = y\utr x$}
			\lbl[l]{138,-5;$x$}
			\lbl[l]{138,35;$y$}

		\end{lpic}
		\caption{Labeling rules at crossings, markers, and singular points.\label{michal_biq2}}
	\end{center}
\end{figure}

Let $X$ and $Y$ be biquandles. A function $f:X\rightarrow Y$ is called a {\it biquandle homomorphism} if $f({x}\utr{y})={f(x)}\utr {f(y)}$ and $f({x}\otr {y})={f(x)}\otr {f(y)}$ for any $x,y\in X.$ We denote the set of all biquandle isomorphic from $X$ to $Y$ by ${\rm Hom}(X,Y)$. A bijective biquandle homomorphism is called a \emph{biquandle isomorphism}. Two biquandles $X$ and $Y$ are said to be {\it isomorphic} if there is a biquandle isomorphism $f: X \to Y$. 

Let $D$ be a singular marked diagram and let $C(D)$, $V(D)$ and $S(D)$ denote the set of all crossings, marked vertices and singular points of $D$, respectively. By a {\it semi-arc} of $D$ we mean a connected component of $D \setminus (C(D) \cup V(D) \cup S(D))$.

\begin{definition}

Let us fix a biquandle $X$, let $D$ be an oriented singular marked diagram of an oriented immersed surface-link, and let $\Lambda$ be the set of components of this diagram. A \emph{biquandle coloring} ${\mathcal C}$ is a mapping ${\mathcal C}:\Lambda\to X$ such that around each classical, marked or singular point, the relation shown in Figure \ref{michal_biq2} holds. These conditions are consistent around each classical, marked or singular point due to the axioms for the biquandle. Denote by ${Col}_X(D)$ the set of all colorings of the diagram $D$ with biquandle $X$.

\end{definition}

\begin{proposition}
	The biquandle axioms are chosen such that given a biquandle coloring of one side of any $\Gamma$--type move, there is a unique biquandle coloring of the other side of the move with the condition that colors agree on the boundary arcs that leave the disc where the move is performed. 
\end{proposition}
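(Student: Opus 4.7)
The plan is to proceed by a case-by-case check through all the move families $\Gamma_1,\ldots,\Gamma_{12}d$ listed in Figures \ref{michal_oriA}--\ref{michal_oriB0} and \ref{michal_oriC}--\ref{michal_oriE}. In each case a small disc $D$ of the diagram is replaced by another; one fixes the colors on the finitely many arcs crossing $\partial D$ and must show that the biquandle coloring extends to the interior uniquely, both before and after the move, and that the induced boundary data on the two sides agree.

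The uniqueness half is essentially automatic from axiom (ii), the adjacent labels rule: at every classical crossing, marker, and singular point, any two adjacent labels among the four local semi-arcs determine the other two. Hence, starting from any fixed color on a boundary arc and propagating inward, every interior semi-arc is forced. What genuinely requires verification is \emph{consistency}: when two different propagation paths meet at the same interior vertex, or when the forced interior values return to another boundary arc, the two candidate labels must coincide. Each individual move reduces this check to one clause of the biquandle definition.

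Concretely, for the classical Reidemeister--type moves $\Gamma_1,\Gamma_1'$ the idempotency $x\utr x=x\otr x$ combined with invertibility of $\alpha_y$ and $\beta_y$ provides the extension; for $\Gamma_2$ the invertibility of the sideways map $S(x,y)=(y\otr x,\ x\utr y)$ is exactly what is required; and for $\Gamma_3$ the three exchange laws (iii)(1)--(3) realize the three strand-reorderings past a triple point. For the marker moves $\Gamma_4,\Gamma_4',\Gamma_5,\Gamma_6,\Gamma_6',\Gamma_7,\Gamma_8$ the marker labeling rule of Figure~\ref{michal_biq2}, namely $x\otr y=x\utr y$ and $y\otr x=y\utr x$, forces the interior colors and makes a marker transparent both to a crossing and to another marker. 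For the singular moves $\Gamma_9,\Gamma_9',\Gamma_{10}$ and the mixed moves $\Gamma_{11}a$--$\Gamma_{11}d$, $\Gamma_{12}a$--$\Gamma_{12}d$ the singular point obeys the same two equalities as a marker, so the consistency equations reduce to the axioms already used once one carefully tracks which arcs reach $\partial D$.

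The hard part will be the $\Gamma_{11}$ and $\Gamma_{12}$ families, each of which features a singular vertex interacting with a marker or a classical crossing inside a single disc. The delicate step is to identify, among the exchange laws and adjacent-labels rules, which one is supplying the consistency check for that particular configuration, and to verify that the boundary colors forced on the two sides of the move actually coincide. Once this bookkeeping has been carried out move by move, the proposition follows.
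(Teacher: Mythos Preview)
Your overall plan coincides with the paper's proof: both argue move by move, deferring $\Gamma_1,\ldots,\Gamma_3$ to the classical biquandle/Reidemeister literature, $\Gamma_4,\ldots,\Gamma_8$ to the marked-graph biquandle literature, and then observing that the new singular moves $\Gamma_9,\Gamma_9'$ behave like $\Gamma_3$ while $\Gamma_{11}a$--$d$ and $\Gamma_{12}a$--$d$ behave like $\Gamma_4,\Gamma_4'$ once one uses the singular-vertex constraint $x\otr y=x\utr y$, $y\otr x=y\utr x$.

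There is one concrete slip that would need fixing before the argument is correct. You attribute the equalities $x\otr y=x\utr y$ and $y\otr x=y\utr x$ to the \emph{marker} labeling rule of Figure~\ref{michal_biq2}; in fact that is the rule at a \emph{singular} vertex. At a marker vertex all four incident semi-arcs carry the same label~$x$ (the saddle sheet is locally connected), so a marker is transparent for an even more elementary reason than you state. Consequently your later sentence ``the singular point obeys the same two equalities as a marker'' is backwards: the singular vertex carries the nontrivial constraint, and it is precisely this constraint that the paper invokes to reduce $\Gamma_{11},\Gamma_{12}$ to the $\Gamma_4,\Gamma_4'$ case. With the two labeling rules assigned to the correct vertex types, your case analysis (including the uniqueness argument via the adjacent-labels rule) goes through and matches the paper's.
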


\begin{proof}
	The proof for the moves $\Gamma_1, \ldots, \Gamma_3$ can be found in the existing literature (see \cite{KNS16}), the axioms for the biquandle were motivated to satisfy those Reidemeister moves, for moves $\Gamma_4, \ldots, \Gamma_8$ see for example \cite{JN20}. We checked, by standard hand calculations, the proof for the remaining moves, using the diagrams shown in Figures \ref{michal_oriA}--\ref{michal_oriB0},  \ref{michal_oriC}--\ref{michal_oriE}.
	\par 
	In particular, checking the validity for the moves $\Gamma_9$ and $\Gamma_9'$ is similar to the case of $\Gamma_3$ case. The validity for the moves $\Gamma_{12}a-\Gamma_{12}d$ and $\Gamma_{11}a-\Gamma_{11}d$ are similar to the $\Gamma_4$ and $\Gamma_4'$ cases, because of the fact that $x\otr y = x\utr y$ and $y\otr x = y\utr x$ around the singular point in Figure \ref{michal_biq2}.

\end{proof}

Ashihara \cite{Ash12} gave a method to calculate the fundamental biquandle $BQ(L)$ of an oriented surface-link from its marked graph diagram. To get a fundamental biquandle $BQ_S(L)$ of an oriented singular surface-link we can take the analogous presentation $\left< S\;|\; R\right>$ as the quotient of the free biquandle on the set $S$ (generators associated to the semiarcs of the diagram) by the equivalence relation generated by relations $R$ (see Figure \ref{michal_biq2}) from any diagram for $L$ (see \cite{KKKL18} for more details of the construction). From the previous Proposition the biquandles $BQ_S(L_1)$ and $BQ_S(L_2)$ are isomorphic if $L_1$ and $L_2$ are equivalent singular surface-links.

\begin{corollary}
	The number of biquandle colorings $\#Col_X(D)$ of an oriented singular marked diagram $D$ is an invariant of an oriented singular surface-link $\mathcal{L}$ presented by $D$, we can denote it therefore by $\#Col_X(\mathcal{L})$ and also called biquandle coloring invariant.
\end{corollary}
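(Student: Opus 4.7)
The plan is to deduce the corollary directly from the preceding proposition together with the theorem of \cite{HKM25, Jab23} that characterizes equivalence of oriented immersed surface-links in terms of the $\Gamma$-moves. Since equivalence of two diagrams $D$ and $D'$ representing the same $\mathcal{L}$ is generated by a finite sequence of the moves $\Gamma_1,\ldots,\Gamma_{12}d$ together with planar isotopy, it suffices to verify that $\#Col_X(D)=\#Col_X(D')$ whenever $D$ and $D'$ differ by a single such move or a planar isotopy.

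First I would handle planar isotopy: a biquandle coloring is an assignment of elements of $X$ to semi-arcs subject to local relations at crossings, markers, and singular points, so the set $Col_X(D)$ depends only on the combinatorial type of $D$ and is manifestly preserved under planar isotopy. Next, for each $\Gamma$-move, I would construct an explicit bijection $\Phi:Col_X(D)\to Col_X(D')$. Given any ${\mathcal C}\in Col_X(D)$, restrict it to the arcs of $D$ lying outside the small disk $\mathbb{D}$ in which the move takes place; on the boundary of $\mathbb{D}$ this yields a coloring of finitely many endpoints, which is identical for $D$ and $D'$. By the previous proposition, there is exactly one way to extend this boundary coloring to a biquandle coloring inside $\mathbb{D}$ for either side of the move. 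Defining $\Phi({\mathcal C})$ to be the unique coloring of $D'$ which agrees with ${\mathcal C}$ outside $\mathbb{D}$ therefore gives a well-defined map, and the symmetric construction gives its inverse.

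The only potentially delicate point, and the one I would expect to require the most care, is making sure the boundary-agreement argument is valid for every one of the twelve move types, including the singular moves $\Gamma_9,\Gamma_9',\Gamma_{10}$ and the mixed moves $\Gamma_{11}a$--$\Gamma_{12}d$ where both a singular vertex and a marker or crossing enter the disk. However, these verifications have already been subsumed in the proposition, so no further case analysis is needed here: the bijection $\Phi$ is produced uniformly from its conclusion.

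Combining the bijection for each move with the classification theorem gives $\#Col_X(D)=\#Col_X(D')$ for any pair of diagrams representing equivalent oriented immersed surface-links. Hence $\#Col_X$ descends to a function on equivalence classes, so the notation $\#Col_X(\mathcal{L})$ is well defined and $\#Col_X$ is a biquandle coloring invariant of oriented immersed surface-links, as claimed.
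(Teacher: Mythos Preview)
Your proposal is correct and is precisely the argument the paper has in mind: the corollary is stated without proof because it is meant to follow immediately from the preceding proposition together with the classification theorem of \cite{HKM25, Jab23}, exactly via the move-by-move bijection you describe. You have simply spelled out the standard details that the paper leaves implicit.
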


Let us consider the singular surface link $L_T$, shown as the singular marked diagram $D_T$ in Figure \ref{michal_031}. Let a biquandle on $X_T=\{1, 2, 3\}$ be the biquandle with the operation given by the following matrix.
\renewcommand*{\arraystretch}{1.4}	
	
	\[
	\begin{array}{r|rrr} 
		\utr & 1 & 2 & 3\\\hline
		1 & 3 & 1 & 3\\
		2 & 2 & 2 & 2\\
		3 &	1 & 3 & 1
	\end{array}\quad
	\begin{array}{r|rrr}
		\otr & 1 & 2 & 3\\\hline
1 & 3 & 3 & 3\\
2 & 2 & 2 & 2\\
3 &	1 & 1 & 1
	\end{array}
	\]

We find that $\#Col_{X_T}(D_T)= 5$, two of the admissible coloring is shown in Figure \ref{michal_031}, the other two admissible colorings are obtained from the previous by exchanging colors $1$ with $3$, and the fifth one is monochromatic, colored by element $2$.

	\begin{figure}[h]
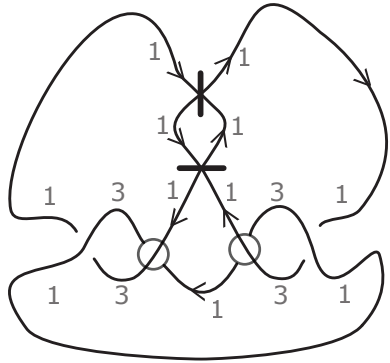

	\begin{center}
		\begin{lpic}[]{michal_032(11cm)}
		\end{lpic}
		\caption{Admissible colorings of the diagram $D_T$.\label{michal_031}}
	\end{center}
\end{figure}

\section{Ribbon $2$-knots and their fundamental quandles}\label{sec5}

Let us consider an oriented surface-knot $F$ in $\mathbb{S}^4$. From Tubular Neighborhood Theorem $F$ admits a tubular neighborhood, denoted $N(F)$, i.e., an image of a smooth embedding $N:\mathbb{D}^2\times F$ such that for any $p\in F$ we have $N(0,p)=p$. The \emph{exterior} of $F$ is $E(F):=\mathbb{S}^4\backslash int(N(F))$. If two surface-knots are equivalent, then their exteriors are diffeomorphic. The \emph{group} of a surface-link $F$ denoted $\mathcal{G}(F)$ is $\pi_1(E(F))$, the group is finitely presented.
\par The other invariant of surface-knot $F$ is its fundamental quandle $\mathcal{Q}(F)$, associated group of the knot quandle is group isomorphic to the knot group (see \cite{Kam17} for an introduction). Every $2$-knot that has a ($\mathbb{R}^3$) diagram without triple points is ribbon \cite{Yaj64}. Two ribbon $2$-knots or ribbon torus-knots with isomorphic fundamental quandles have isomorphic fundamental biquandles \cite{Ash14}. Therefore, we focus in this section on fundamental quandles, for this family of $2$-knots. It is known that there exist arbitrarily many inequivalent surface-knots with the same knot fundamental quandle \cite{Tan07}.
\par 
Recently, in \cite{TT25} it is shown that there exist infinitely many pairs of $2$-knots not distinguishable by a knot group but tell apart by their quandles. However, none of those pairs were both ribbon $2$-knots, at least one of them was non-ribbon twist-spun knot. It is natural to ask a question if there is a pair of ribbon $2$-knots with isomorphic groups that have different knot quandles. We obtain the following.

\begin{theorem}
There are infinitely many pairs of ribbon $2$-knots with mutually isomorphic fundamental groups of their exteriors but mutually non-isomorphic knot fundamental quandles.
\end{theorem}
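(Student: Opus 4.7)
The plan is to construct the required infinite family of pairs by Artin's spinning construction applied to an infinite family of chiral classical knots. Fix the $(2,2n{+}1)$-torus knots $K_n \subset S^{3}$ for $n \geq 1$; they are chiral, i.e.\ no orientation-preserving self-homeomorphism of $S^{3}$ carries $K_n$ to its mirror image $K_n^{*}$, and their knot groups $\langle a,b \mid a^{2} = b^{2n+1}\rangle$ are pairwise non-isomorphic (distinguished, e.g., by the order of $Z(G)\backslash G$ quotients). For each $n$, I form the pair of Artin ($0$-twist) spins $\tau(K_n)$ and $\tau(K_n^{*})$. Both are ribbon $2$-knots, because every Artin spin admits a broken surface (singular marked graph) diagram with no triple points, and so is ribbon by Yajima's theorem cited before the statement.

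For each fixed $n$, Artin's theorem gives $\mathcal{G}(\tau(K)) \cong \pi_{1}(S^{3}\setminus K)$, and since the classical knot group is independent of chirality, the two ribbon $2$-knots in the pair share the common group $\pi_{1}(S^{3}\setminus K_n)$. The distinction of the quandles relies on two ingredients: first, the naturality of the fundamental quandle under spinning, $\mathcal{Q}(\tau(K)) \cong \mathcal{Q}(K)$; second, Joyce's classification theorem, which says that the fundamental quandle is a complete invariant of an oriented classical knot up to orientation-preserving homeomorphism of $(S^{3}, K)$. Chirality of each $K_n$ then forces $\mathcal{Q}(K_n) \not\cong \mathcal{Q}(K_n^{*})$, and hence $\mathcal{Q}(\tau(K_n)) \not\cong \mathcal{Q}(\tau(K_n^{*}))$. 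Pairs with different indices $n \neq m$ are genuinely distinct because their common groups differ across $n$, so the construction yields an infinite list of pairs of ribbon $2$-knots as claimed.

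The main obstacle is the naturality statement $\mathcal{Q}(\tau(K)) \cong \mathcal{Q}(K)$, the only step where one must actually use the internal structure of the spin beyond the $\pi_{1}$-level. I would establish it by converting a classical Wirtinger diagram of $K$ into a singular marked graph diagram of $\tau(K)$ via the banded-unlink recipe of Section \ref{sec3}: each classical arc spins to a semi-arc of the marked diagram, each classical crossing spins to a broken-surface crossing with the same Wirtinger quandle relation, so the presentations match on the nose. An alternative route that sidesteps general naturality is to evaluate the counting invariant $\#\mathrm{Col}_{X}$ of Section \ref{sec4} on the explicit singular marked graph diagram of $\tau(K_n)$ obtained from the spin of a standard torus-knot diagram, using a small Alexander biquandle $X$ over $\mathbb{Z}_{p}$; such biquandles are already known to separate $(2,2n{+}1)$-torus knots from their mirrors at the classical level, and a Fox-calculus count shows that this separation survives the spin, so the two entries in each pair yield different coloring counts. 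Either route completes the proof.
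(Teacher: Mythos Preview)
There is a genuine gap, and it is fatal for the specific family you chose. All $(2,2n{+}1)$--torus knots are \emph{invertible}. The Joyce--Matveev theorem does \emph{not} say that the fundamental quandle is a complete invariant up to orientation-preserving homeomorphism; it says that $\mathcal{Q}(K_1)\cong\mathcal{Q}(K_2)$ iff there is a (possibly orientation-reversing) homeomorphism of pairs, equivalently $K_1$ is isotopic to $K_2$ or to the \emph{reverse mirror} $rmK_2$. For an invertible knot one has $rK=K$, hence $rmK=mK$, and therefore $\mathcal{Q}(K_n)\cong\mathcal{Q}(rmK_n)=\mathcal{Q}(K_n^{*})$. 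So even granting the naturality $\mathcal{Q}(\tau(K))\cong\mathcal{Q}(K)$, your two spun knots have \emph{isomorphic} quandles, and the claimed distinction collapses. The ``alternative route'' via Alexander biquandle coloring counts cannot rescue this: $\#\mathrm{Col}_X(K)=\#\mathrm{Hom}(\mathcal{Q}(K),X)$, so once $\mathcal{Q}(K_n)\cong\mathcal{Q}(K_n^{*})$ every counting invariant agrees; and in any case Alexander-type invariants are blind to mirror image.

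There is a second, more structural problem: Artin spins are amphichiral as $2$-knots. Reflecting the spinning circle (complex conjugation on the $S^1$- and $D^2$-factors in the decomposition $S^4=(B^3\times S^1)\cup(S^2\times D^2)$) is an orientation-reversing self-diffeomorphism of $S^4$ that carries $\tau(K)=K^\circ\times S^1$ to itself. Consequently $\tau(K)\cong m(\tau(K))=\tau(mK)$ for every classical $K$, so your ``pair'' $(\tau(K_n),\tau(K_n^{*}))$ consists of two equivalent $2$-knots. Thus the approach cannot be repaired simply by replacing torus knots with non-invertible chiral knots: spinning $K$ and $mK$ always yields the same $2$-knot. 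The paper instead works with Suciu's infinite family $R_k$ of genuinely distinct ribbon $2$-knots sharing the trefoil group, and separates their quandles via an auxiliary group (the core group of $\mathcal{Q}(R_k)$) using lower-central-series computations; that argument does not factor through any classical-knot chirality statement.
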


\begin{proof}
	
	Suciu \cite{Suc85} constructed an infinite family of mutually non-ambient-isotopic $2$-knots (with non-isomorphic $\pi_2$ as $\mathbb{Z}\pi_1$ modules), denoted $R_k$, for any integer $k>0$. We show that this family of surface-knots has the desired property in the Theorem. Each $\mathcal{G}(R_k)$ is isomorphic to the trefoil classical knot group $\left<t, x\;|\;txt=xtx \right>$.
	\par 
	First, let us derive the presentation for the fundamental biquandle $\mathcal{Q}(R_k)$. The presentation for the fundamental biquandle can be calculated directly from the marked graph diagram for surface-knot \cite{Ash12}. It agrees with the method described earlier in this paper. Specifically, we consider relations as in Figure \ref{michal_biq2}, but without singular points and with the trivial one of the operations, namely $x\otr y = x$ for all $x, y$.
	\par 
	We present a marked graph diagram for $R_k$ in Figure \ref*{sc2_4_quandle}, which is the modified, by the relation in Figure \ref{MJ_100}, version of the banded unlink diagram for $R_k$ presented in \cite[p. 55]{Rup22}. In the Figure, the polygonal green region has to appear $k-1$ times to obtain $R_k$. From the arc relations, we see that if we put arc generators $c, d, t$, then we end up with two relations, marked by the red rectangles.

	\begin{figure}[h!t]
		\begin{center}
			\begin{lpic}[]{sc2_5_quandle(14.5cm)}
			\end{lpic}
			\caption{A marked graph diagrams for $R_k$.\label{sc2_4_quandle}}
		\end{center}
	\end{figure}

	Therefore, for any integer $k>0$:
	$$\mathcal{Q}(R_k)=\left<c, d, t\;|\; (d*t)*d=(c*t)*c\;,\; (d*t)*c=(\ldots(t*d)\overline{*}c)*d\ldots )\overline{*}c)*d\right>,$$
	
	where $\overline{*}$ is the dual quandle operation to $*$, and in the second dots $\ldots$ symbols "$)\overline{*}c)*d$" appear $k-2$ times.
	
	\par 
	It is of no use here to consider the quandle cocycle invariant (see \cite{Kam17} for the definition) to distinguish any pair of them. This is because the quandle cocycle invariant can be calculated from the neighborhoods of the triple points in the projection, and each ribbon $2$-knot has a projection without any triple point (as we stated earlier).
\par 
We know that $\mathcal{G}(R_i)\cong \mathcal{G}(R_j)$ for any $i\not= j$. Note that it is also of no use here to consider the \emph{associated group} $As(Q)=\left<x\in Q\;|\;x*y=y^{-1}xy\; (\text{for } x, y \in Q)\right>$, because $As(\mathcal{Q}(R_i))\cong \mathcal{G}(R_i)\cong \mathcal{G}(R_j) \cong As(\mathcal{Q}(R_i))$ for any $i\not= j$.
Our idea to show that $\mathcal{Q}(R_i)\not\cong \mathcal{Q}(R_j)$ for any $i\not= j$ is to use the core groups of their fundamental quandles. The \emph{core group} of the quandle $Q$ is a group with presentation $Core(Q)=\left<x\in Q\;|\;x*y=yx^{-1}y\; (\text{for } x, y \in Q)\right>$.
\par
We can straightforward calculate their presentations (for any integer $k>0$) with substitutions ($t=t^{-1}$, $w=dc^{-1}$):
$$Core(\mathcal{Q}(R_k))=\left<c, t, w\;|\; wctwctw=ctct \;;\; ctwct=w^{k+1}ctw^{k+1} \right>.$$

To show that these groups are mutually non-isomorphic, we use pure group theory \cite[chapter 5]{MKS04}, \cite[chapter 5.1]{Rob96}.

Denote $A_k = Core(\mathcal{Q}(R_k))$ for short, we want to show that $A_n\not\cong A_m$ for any $n\not = m$. In the abelianization $A_k/[A_k,A_k]\cong \mathbb{Z}\oplus\mathbb{Z}_3$ the two defining relations becomes $3[w]=0$, $[c]+[t]=(2k+1)[w]$. The order of $[c]+[t]$ in the torsion summand is $3/gcd(3, 2k+1)$.

A direct Hall-Witt computation gives
$$
[w^{3},c]=[[w,c],w]^{3}\ [[w,c],w,c],\qquad
[w^{3},t]=[[w,t],w]^{3}\ [[w,t],w,t],
$$
so $w^{3}$ is central whenever every weight-2 commutator with $w$ has exponent dividing $3$.

Therefore $w^3$ is central iff $k\equiv 1\pmod 3$. So if $n\not\equiv m\pmod 3$ then $A_n\not\cong A_m$. 
\par 
Assume that $n\equiv m\pmod 3$. Consider group lower-central series $\gamma_1(G)=G, \gamma_{s+1}(G)=[\gamma_s(G),G]$. Assume that (in the contrary) $A_n\cong A_m$, then because $\gamma_s(G)$ is characteristic we have $A_n/\gamma_4(A_n)\cong A_m/\gamma_4(A_m)$. With the standard Hall commutator calculus, only the triple commutator $[[c,t],w]$ survives. 

\par 
Let $y=[[c,t],w]$.
Using the classical Hall basis for a free group on three generators and the two defining relators, one verifies that
$
\gamma_{3}(A_k)/\gamma_{4}(A_k)
=\langle y\mid y^{3}=1,\;y^{\,2k+1}=1\rangle
\cong \mathbb Z_3/\langle 2k+1\rangle .
$
Hence, the exponent of this quotient is $3/\gcd(3,2k+1)$.  This immediately forces
$
A_n\cong A_m\;\Longrightarrow\; n\equiv m\pmod9,
$
We pass to $A_k/\gamma_5(A_k)$, then to $A_k/\gamma_6(A_k)$, etc. Because $n\not = m$, their expansions in base $3$ as integers differ at some digit. Therefore, eventually (after finitely many steps) the quotients become non-isomorphic, a contradiction with $A_n\cong A_m$.

\end{proof}


\begin{thebibliography}{99}

\bibitem[Ash12]{Ash12} S. Ashihara, Calculating the fundamental biquandles of surface links from their ch-diagrams, \emph{J. Knot Theory Ramifications} 21 (2012), 1250102.

\bibitem[Ash14]{Ash14} S. Ashihara, Fundamental biquandles of ribbon $2$-knots and ribbon torus-knots with isomorphic fundamental quandles, \emph{Journal of Knot Theory and Its Ramifications} 23, no. 01 (2014), 1450001.

\bibitem[CES04]{CES04} J.S. Carter, M. Elhamdadi, and M. Saito, Homology Theory for the Set-Theoretic Yang-Baxter Equation and Knot Invariants from Generalizations of Quandles, \emph{Fund. Math.} 184 (2004), 31--54.

\bibitem[HKM25]{HKM25} M.~Hughes, S.~Kim, and M.~Miller, Band diagrams of immersed surfaces in 4-manifolds, \emph{Algebr. Geom. Topol.} 25:3 (2025), 1731--1791.

\bibitem[Jab23]{Jab23} M. Jab\l onowski, Minimal generating sets of moves for surfaces immersed in the four-space, \emph{J. Knot Theory Ramifications} 32 (2023), 2350071.

\bibitem[JKL13]{JKL13} Y. Joung, J. Kim, and S. Y. Lee, Ideal coset invariants for surface-links in $\mathbb{R}^4$, \emph{J. Knot Theory Ramifications} 22 (2013), 1350052.

\bibitem[JKL15]{JKL15} Y. Joung, J. Kim, and S.Y. Lee, On generating sets of Yoshikawa moves for marked graph diagrams of surface-links, \emph{J. Knot Theory Ramifications} 24 (2015), 1550018.

\bibitem[JN20]{JN20} Y. Joung and S. Nelson, Biquandle module invariants of oriented surface-links, \emph{Proceedings of the American Mathematical Society} 148.7 (2020), 3135--3148.

\bibitem[KNS16]{KNS16} A. Kaestner, S. Nelson, and L. Selker, Parity biquandle invariants of virtual knots,
\emph{Topology and its Applications} 209 (2016), 207--219.

\bibitem[Kam17]{Kam17} S. Kamada, \emph{Surface-Knots in $4$-Space}, Springer Monographs in Mathematics, Springer (2017).

\bibitem[KKKL18]{KKKL18} S. Kamada, A. Kawauchi, J. Kim, and S.Y. Lee, Biquandle cohomology and state-sum invariants of links and surface-links. \emph{Journal of Knot Theory and Its Ramifications} 27(11) (2018), 1843016.

\bibitem[KR03]{KR03} L.H. Kauffman and D.E. Radford, Bi-oriented quantum algebras, and generalized Alexander polynomial for virtual links, \emph{Contemp. Math.} 318 (2003), 113--140.

\bibitem[MKS04]{MKS04} W. Magnus, A. Karrass, and D. Solitar, \emph{Combinatorial Group Theory}, 2nd rev. ed., Dover (2004).

\bibitem[Oht16]{Oht16} T. Ohtsuki, "Problems on Low-dimensional Topology, 2016 (Intelligence of Low-dimensional Topology)." (2016), 115-129.

\bibitem[Rob96]{Rob96} D.J.S. Robinson, \emph{A Course in the Theory of Groups}, 2nd ed., GTM 80, Springer (1996).

\bibitem[Rup22]{Rup22} B.M. Ruppik, PhD thesis: \emph{Casson-Whitney unknotting, Deep slice knots and Group trisections of knotted surface type}, Rheinische Friedrich-Wilhelms-Universität Bonn (2022).

\bibitem[Suc85]{Suc85} A.I. Suciu, Infinitely many ribbon knots with the same fundamental group, \emph{Mathematical Proceedings of the Cambridge Philosophical Society} 98.3 (1985), 481--492.

\bibitem[Tan07]{Tan07} K. Tanaka, Inequivalent surface-knots with the same knot quandle, \emph{Topology Appl.} 154 (2007), 2757--2763.

\bibitem[TT25]{TT25} K. Tanaka and Y. Taniguchi, $2$-knots with the same knot group but different knot quandles, to appear in \emph{J. Math. Soc. Japan} (2025).

\bibitem[Yaj64]{Yaj64} T. Yajima, On simply knotted spheres in $\mathbb{R}^4$, \emph{Osaka J. Math.} 1(2) (1964), 133--152.

\bibitem[Yos94]{Yos94} K.~Yoshikawa, An enumeration of surfaces in four-space, \emph{Osaka J. Math.} 31 (1994), 497--522.
	
\end{thebibliography}
\end{document}